\newcommand{\R}{{\mathbb{R}}}
\newtheorem{theorem}{Theorem}
\newtheorem{lemma}[theorem]{Lemma}
\newtheorem{corollary}[theorem]{Corollary}
\newtheorem{example}{Example}
\begin{document}
\title{Knotted Surfaces and Their Fold Curves}
\date{\today}
\author{Joel Hass}
\address{University of California, Davis, California 95616}
\email{jhass@ucdavis.edu}
\thanks{This work was started while the author was a Christensen Fellow at St. Catherine's College, Oxford.
Partially supported by NSF grant DMS:FRG 1760485 and BSF grant 2018313}
\subjclass{Primary 57M50; Secondary 57K10, 65D17, 53C45}
\keywords{fold curve, profile curve,  contour generator, silhouette, apparent contour, silhouette,
knot,  knotted surface}
\begin{abstract}
This paper examines the relationship between the knotting of an embedded surface in $\R^3$ and the knotting of its fold curves, formed by the singular set of projection to a plane.
The first result shows that every surface, no matter how knotted, can be isotoped so that its fold curves form an unlink.
A second result defines a new invariant which gives a complete obstruction to turning a fixed curve on a surface into a fold curve.
\end{abstract}
\maketitle

\section{fold curves} \label{introd}

The  singular points  of a projection $\pi: F \to \R^2$ of a smooth embedded surface to the $xy$-plane lie along curves  where the derivative $d\pi_{q}$ is not injective.
These are  called the  {\em fold curves} and are visualized as the outline, or edges, of the surface when viewed from above.
 
 \begin{figure}[htbp] % figure placement: here, top, bottom, or page
\centering
\includegraphics[width=1.2in]{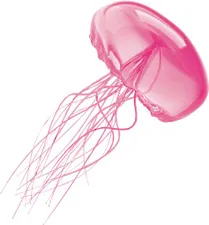} 
\caption{Fold curves project to the edges of a 2-dimensional image.}
\label{profileseg}
\end{figure}

In this paper we investigate the connection between the knotting of these fold curves and that of the surface.
The first  main result shows that no surface, no matter how knotted, is forced to have a non-trivial link as its fold curve.  
Starting with  an arbitrary initial surface we can construct an isotopy to a surfaces whose fold curves form an unlink.
The second main result describes when a curve can be turned into a fold cove.
We introduce an invariant that vanishes exactly when it is possible for a curve on a surface to 
be turned into a fold curve by an isotopy of the surface that fixes the curve.  

\subsection*{Background}

Whitney classified the local singularities of a generic map from a surface to the plane \cite{Whitney}.
Haefliger  investigated the related singularities of maps formed by composing an immersion from a surface to  $\R^3$ 
with  orthogonal projection to the plane \cite{Haefliger}. 
For an open and dense subset of smooth immersed surfaces in $\R^3$, projection give rise to  {\em stable maps} \cite{GG}.
The singular set of these generic maps consists of a collection of smooth curves in $\R^3$,
the  fold curves.  The surface has a tangent plane that is perpendicular to the $xy$-plane along the fold curves.

The projection  of the fold curves to the plane
is a collection of piecewise-smooth curves called the {\em apparent contour}.
The apparent contour is smooth except at a finite number of singular points, called {\em cusps}.
The preimage of a cusp on a fold curve is a {\em cuspidal point} at which the fold
curve has a vertical tangency.
While the apparent contour is only piecewise-smooth, a fold curve itself is smoothly embedded in $\R^3$.
Up to a smooth change of coordinates, a local model for a cusp is given by the 
projection to the $xy$-plane of the surface parameterized by 
$$
f(u,v) = (u, uv+v^3, v),
$$
as shown in Figure~\ref{fig:cusp}. 

\begin{figure}[htbp] % figure placement: here, top, bottom, or page
\centering
\includegraphics[width=2.in]{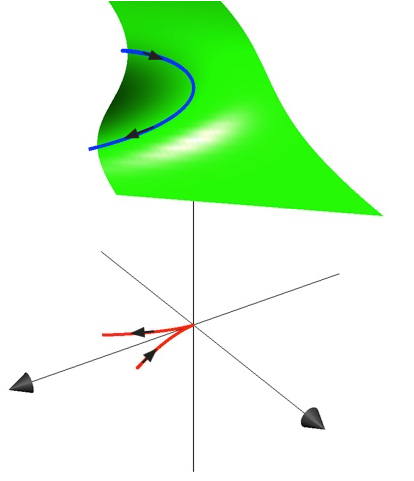} 
\caption{A projection of a generic surface has singularities along a collection of fold curves. 
These are smooth regular curves whose projection to the plane gives a collection of piecewise-smooth curves, the apparent contour.}
\label{fig:cusp}
\end{figure}

Away from the cuspidal points, a  vertical line near a fold curve $\gamma$ intersects the surface in two more points on one side of $\gamma$  than on the other.
We take  $\vec h$  to be the horizontal unit vector that is normal to the surface and points to this two-sheeted side. 
The vector $\vec h$ flips from one side of the surface to the other as $\gamma$  crosses a cuspidal point.
Away from these points $\vec h$ induces an orientation on the fold curve 
in which the pair $( \dot \gamma, \vec h)$ agrees with the orientation of the $xy$-plane, after projection.
This orientation is consistent as a fold curve passes through a cuspidal point, and thus gives a canonical global orientation to the curve. 
With this orientation projected to the plane, a curve in the apparent contour takes a sharp left turn at a cusp, and we say it has a left-handed cusp.
A closed surface $F \subset \R^3$ also has an outward pointing normal vector field $\vec n_F$.
Along a fold curve, $\vec n_F$ has positive inner product with $\vec h$ on one side of a cuspidal point and with $-\vec h$ on the other.

Fold curves play an important role in the imaging, visualization and reconstruction of objects in 3-dimensions. 
In different settings the fold curves are called the {\em contour generator},  {\em profile curves}, 
{\em critical set}, {\em rim}, or {\em outline} of a surface. 
The apparent contour is also referred to as the {\em branch set},
 {\em occluding contour}, {\em silhouette} or {\em extremal boundary} of a surface projection
\cite{GG, BoyerBerger, CipollaBlake, FukudaYamamoto, Giblin, Pignoni, PlantingaVegter, VaillantFaugeras}. 
 In images of transparent objects fold curves can remain visible while the rest of the 
 surface is unseen; see Figure~\ref{profileseg}.
Fold curves also appear in the visualization of embedded surfaces in $\R^4$, which can be described by first projecting to $\R^3$ and then to the plane \cite{Carter}.

The {\em reconstruction problem} seeks to build a surface in $\R^3$ from a two-dimensional projection.
Solving this problem requires additional data beyond the fold curves. 
Figure~\ref{profilet} shows two non-isotopic tori with the same pair of fold curves. 
The problem of  constructing surfaces in $\R^3$ that have given fold curves is discussed in \cite{Bellettini, Hacon, Hacon2, MenascoNichols}.

\begin{figure}[htbp] % figure placement: here, top, bottom, or page
\centering
\includegraphics[width=1.in]{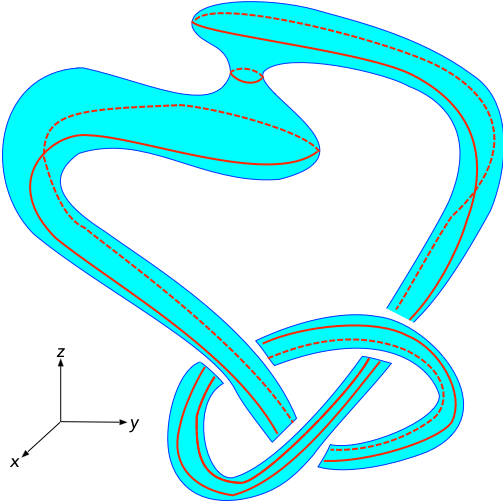} \hspace{.3in} \includegraphics[width=1.in]{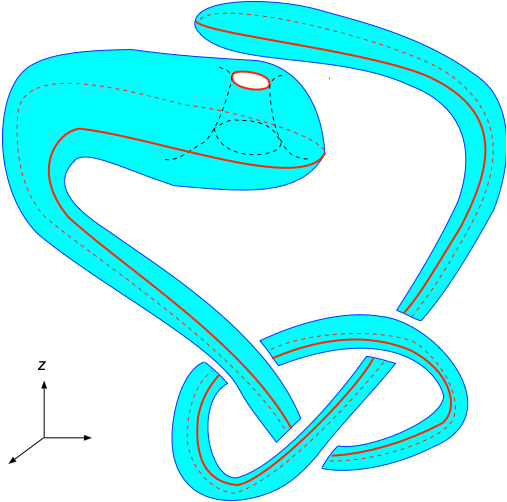}  \hspace{.3in} \includegraphics[width=1.in]{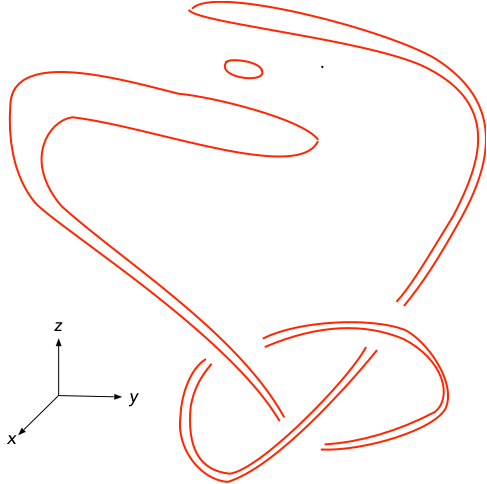} 
\caption{A knotted and unknotted torus with the same pair of fold curves, shown at right.}
\label{profilet}
\end{figure} 

\subsection*{Outline} 
In Section~\ref{knotted}  we examine how the isotopy class of a surface determines the knottedness of its fold curves.
It seems reasonable to guess that a knotted surface must necessarily contain at least one non-trivial knot among its collection of fold curves,
as suggested by the knotted torus in Figure~\ref{trefoils}.
However this is not the case.

\begin{figure}[htbp] % figure placement: here, top, bottom, or page
\centering
 \includegraphics[width=1.5in]{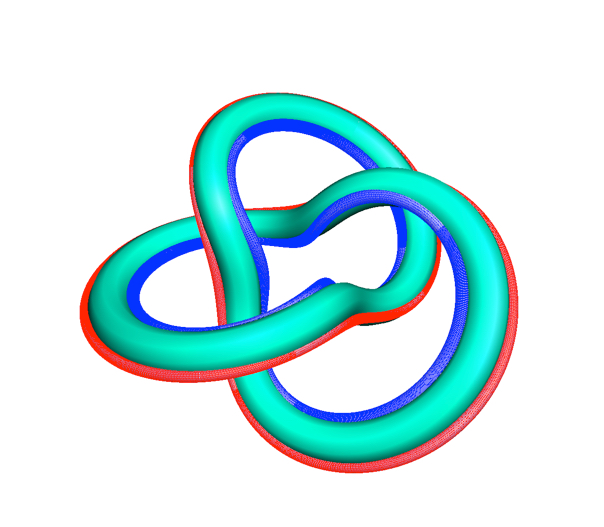}  \includegraphics[width=1.5in]{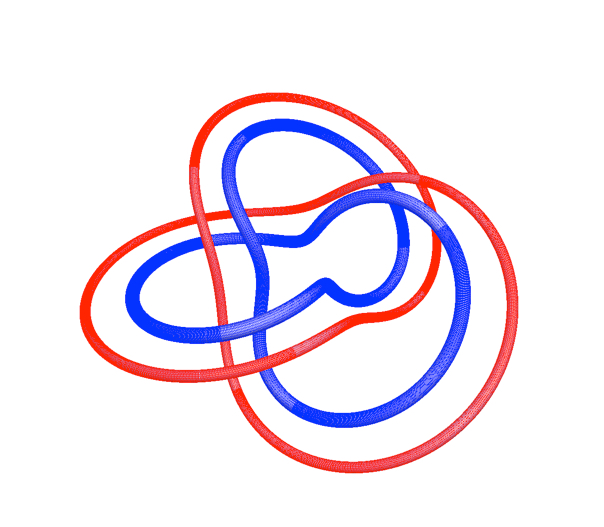} 
\caption{A knotted torus in $\R^3$ with two fold curves, each representing a trefoil knot.}
\label{trefoils}
\end{figure}

 Theorem~\ref{unknot} shows that any
 embedded genus-$g$ surface $F \subset \R^3$, no matter how knotted or convoluted, can be isotoped 
 so that its  collection of fold curves becomes an unlink with at most $g$ components. 
 So the isotopy class of the surface cannot force the
 fold curves to form a nontrivial link. In many cases the entire collection of fold curves can be reduced to a single unknotted curve.  
 This can be done  for any genus-one surface, and for any surface that bounds a handlebody.
 
In Section~\ref{CurvetoFold} we investigate the converse question: 
Is it possible for an unknotted surface to generate 
a knotted fold curve? 
We show that this is generally possible.
Any knot that embeds on an unknotted surface of genus $g$ can be realized as a fold curve of that surface, after a suitable isotopy of the curve and surface.
For example, any torus knot can be realized as a fold curve of a suitably embedded unknotted torus, and any pretzel knot is a fold curve of an unknotted genus-two surface.
In general, given a surface embedded in $\R^3$ and a curve on that surface, 
there is no obstruction to finding an isotopic curve on the surface and realizing that curve as a fold curve after an isotopy of the surface.

The situation becomes more restricted if we take a curve that lies on a surface $F$ and 
try to find an isotopy of $F$ that turns the curve into a fold curve while fixing the curve.
We define an  invariant \emph{$\delta(\gamma, F)$} that gives an obstruction
to achieving this. 
The $\delta$-invariant is analogous to the Thurston--Bennequin number of a Legendrian knot, but
is defined for general smooth curves and surfaces.
 In Section~\ref{restrictions} we give some applications of these results.
In particular we give examples of planar curves that cannot arise as projections of a fold curve.

\subsection*{ Remark on terminology.} 
A  {\em surface} in this paper refers to a smooth, connected, compact two-dimensional manifold with no boundary
 embedded in $\R^3$.
We use the term \emph{fold curve} for the one--dimensional singular set of the
projection of a surface  to the $xy$-plane. This set consists of
fold points in the sense of Whitney~\cite{Whitney} and Haefliger~\cite{Haefliger},
and forms a collection of smoothly embedded curves in $\mathbb{R}^3$. The projection of
the fold curves to the plane is called the \emph{apparent contour}.
Note the distinction between the singular curves  in $\R^3$ (the fold curves) and their projection in $\R^2$ (the apparent contour).
 In Section~\ref{knotted} we introduce  two explicit constructions of fold curves on surfaces, a \emph{pocket} and a \emph{basic sphere}.
%%%%%%%%%%%%%%%%%%%%%%%%%%%%%%%%%%%%
%%%%%%%%%%%%%%%%%%%%%%%%%%%%%%%%%%%%

\section{Fold curves of knotted surfaces} \label{knotted}

Our first result shows that given any surface $F $, no matter how knotted, there is  an isotopy from $F = F_0$ to a surface $F_1$ whose entire collection of fold curves 
form an unlink. 

\begin{theorem} \label{unknot}
An embedded connected genus-$g$ surface $F \subset \R^3$ can be isotoped so that its full collection of fold curves consists of an unlink of at most $g$-components. 
\end{theorem}
\begin{proof} 

We use the following well-known property of surfaces in $\R^3$.
\begin{lemma} \label{tubes}
A genus-$g$ surface $F \subset \R^3$ 
is isotopic to a surface obtained by starting with $k$ 2-spheres
and successively adding $(g+k-1)$ 1-handles, where $1 \le k \le g$.
\end{lemma}
\begin{proof} 
The Loop Theorem \cite{Papakyriakopoulos} implies that any surface in $\R^3$ other than a 2-sphere is compressible. 
Compressing $F$ $(g+k-1)$-times results in a collection of $k$ 2-spheres. 
If $k > g$ then at least one compression splits off a 2-sphere, and thus is inessential.  
Since we define compressions to be essential, we have $1 \le k \le g$.
Reversing this process reconstructs $F$ by successively adding $(g+k-1)$ 1-handles to these $k$ 2-spheres.  When a 1-handle connects two different components it does not change the total genus, and when it goes from a component to itself it adds one to the genus. 
\end{proof} 

We can construct a surface whose fold curves are compatible with the handle structure described in 
Lemma~\ref{tubes}. 
A {\em fold-curve band sum} adds a 1-handle to $F$ while 
performing a band sum to its fold curves as in Figure~\ref{fig:bandsum}. 

\begin{figure}[htbp] % figure placement: here, top, bottom, or page
\centering
\includegraphics[width=2in]{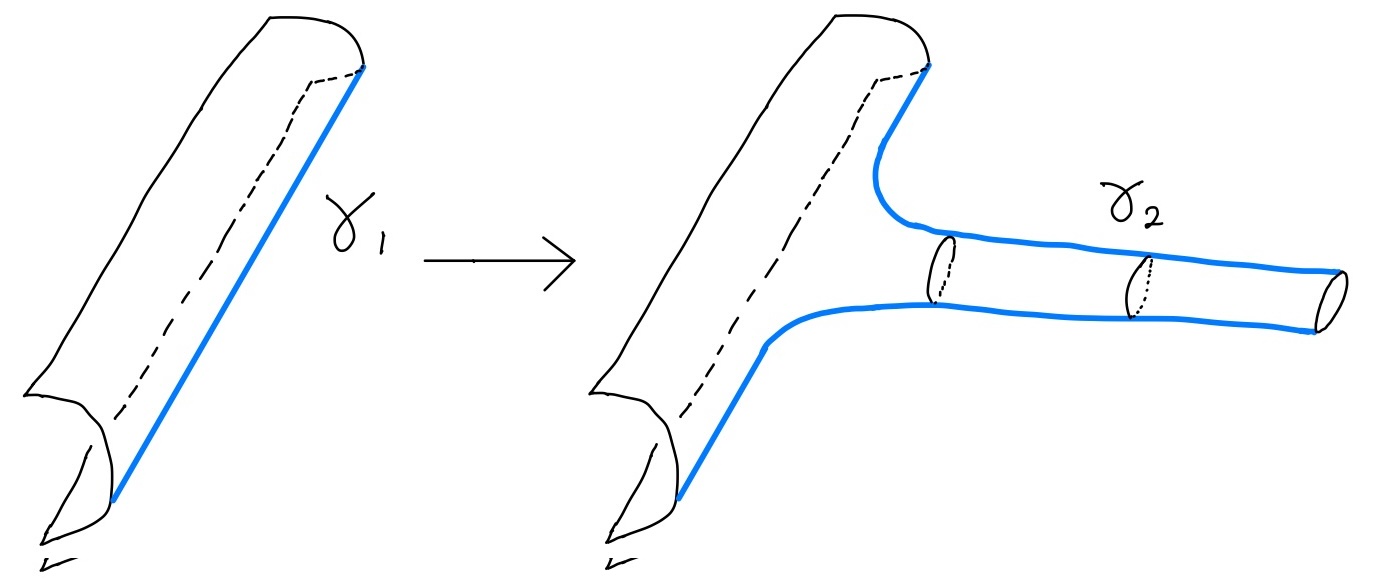}  
\caption{A fold-curve band sum}
\label{fig:bandsum}
\end{figure}

We specify this construction using the unit normal vector $\vec h$ associated to the fold curve.
Suppose that two points $q_1, q_2 $ each lie on fold curves of $F$, 
and that a smooth embedded arc $\alpha \subset \R^3$ runs from $q_1$ to $q_2$ in the complement of $F$.  Moreover suppose that
$\alpha'(t)$ is  never vertical, $\alpha'(0) = -\vec h |_{q_1}$ and $\alpha'(1) = \vec h |_{q_2}$.
We thicken $\alpha$ slightly horizontally to obtain a horizontal band that is transverse
to vertical lines and that intersects the fold curve horizontally in two short arcs containing $q_1$ and $q_2$.
We then add a 1-handle by taking
the boundary of an $\epsilon$-neighborhood of  $\alpha$ for small $\epsilon$.
The fold curves of the resulting surface are a band sum of the fold curves of the initial surface $F$.

Next we define two local surface constructions that allow fold curve band sums to be applied to either side of a surface.  
The first construction is a local deformation of a surface that adds a small unknotted fold curve 
inside an open disk that is disjoint from other fold curves, as shown in Figure~\ref{pocket}. We call this configuration a {\em pocket}. 
A pocket allows a fold curve band sum  to be attached on either side of the surface.

\begin{figure}[htbp] % figure placement: here, top, bottom, or page
\centering
\includegraphics[width=1.in]{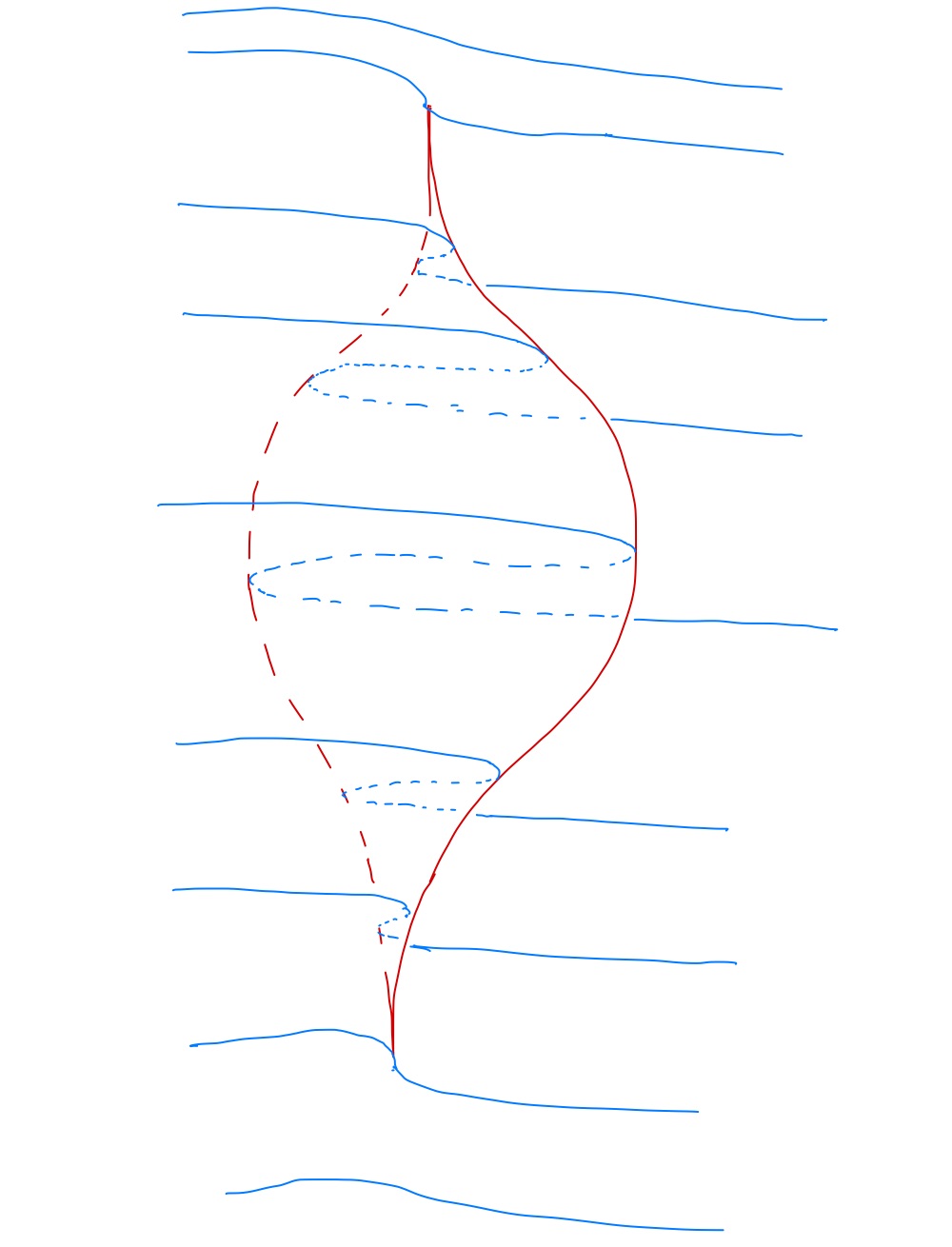} 
\caption{A local deformation of a surface produces a ``pocket'', adding an unknotted component to the collection of fold curves.}
\label{pocket}
\end{figure}

The second construction describes a 2-sphere embedded in $\R^3$ which has a single fold curve with two cuspidal points, as in Figure~\ref{S2cusps}.
We call this a {\em basic sphere}. 
A basic sphere has a single fold curve formed by two arcs, one with  $\vec h$ pointing to the exterior and one with  $\vec h$ pointing to the interior of the sphere. 
This allows for a fold curve band sum to attach on either side of the 2-sphere. 

\begin{figure}[htbp] % figure placement: here, top, bottom, or page
\centering
\includegraphics[width=1.75in]{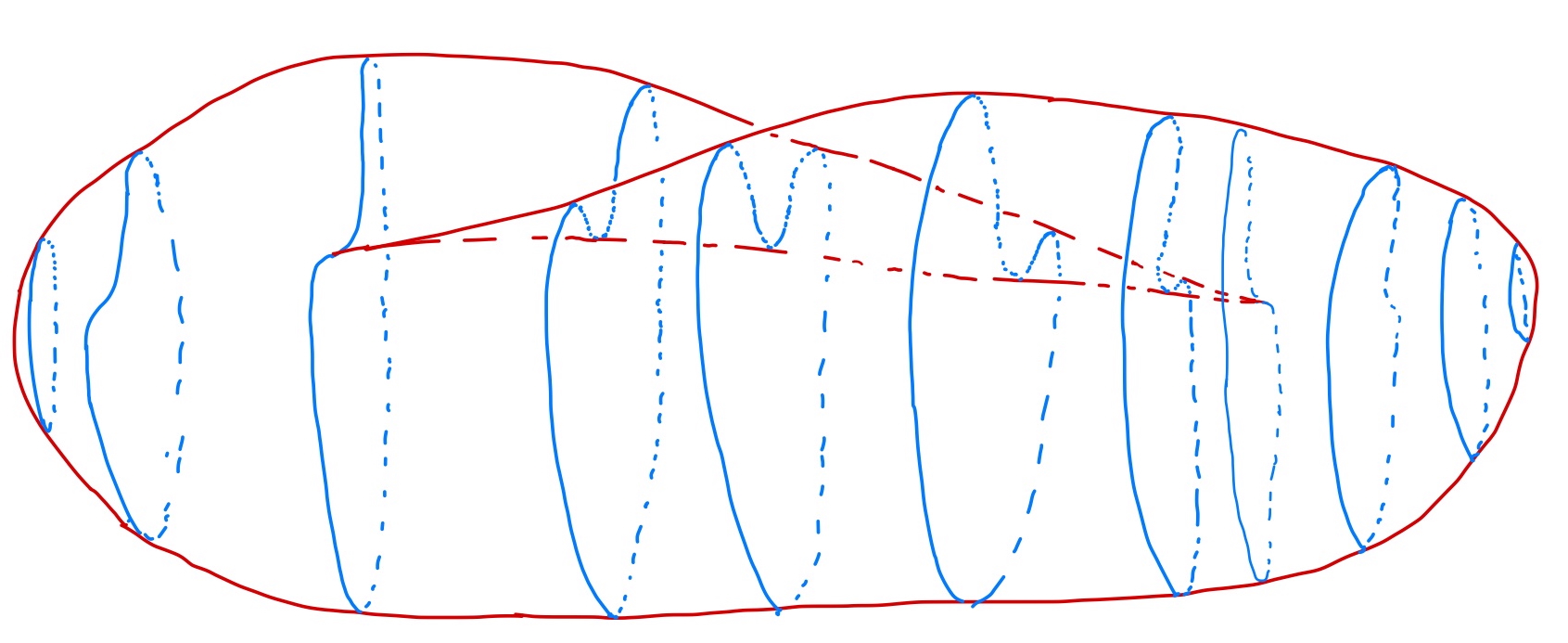} 
\caption{Vertical cross-sections for a basic sphere.}
\label{S2cusps}
\end{figure}

To prove Theorem~\ref{unknot} we induct on the surface genus. 
For $F$ having genus zero, the conclusion is realized by an isotopy that takes $F$ to a round sphere. This isotopy
exists by Alexander's Theorem \cite{Alexander}. 
Suppose next that $F \subset \R^3$ has genus $g > 0$. 
 From Lemma~\ref{tubes} we know that $F$ can be constructed by starting with $k$ basic 2-spheres and adding a sequence of 1-handles.  Each basic 2-sphere has a single unknotted fold curve formed by two arcs,
with $\vec h$ pointing to the interior of the 2-sphere along one arc and to the exterior along the other.
Moreover the full collection of fold curves of these spheres forms an unlink with $k$ components.
We now construct  a fold curve band sum for each 1-handle used in constructing $F$. 
The fold curve band sum is made to attach to  an existing fold curve at one end and to a newly created pocket fold curve at the other.  The resulting fold curve is then isotopic to the original one and the link remains an unlink with the same number of components. 
All band attachments can be arranged so that their projections are disjoint.
 Repeating for each of the   the  $(g+k-1)$ 1-handles produces a surface isotopic to $F$ whose fold curves form a $k$-component unlink. 
 \end{proof} 
 
 \begin{corollary}
 Any torus can be isotoped so that its full collection of fold curves consists of a single unknotted curve. 
 \end{corollary}
 \begin{proof} 
 A torus in $\R^3$ compresses to a 2-sphere after one compression.
 \end{proof} 
 
 For the knotted torus in Figure~\ref{trefoils}, the result of such an isotopy is shown in Figure~\ref{torusOneCurve}.
\begin{figure}[htbp] % figure placement: here, top, bottom, or page
\centering
\includegraphics[width=1.75in]{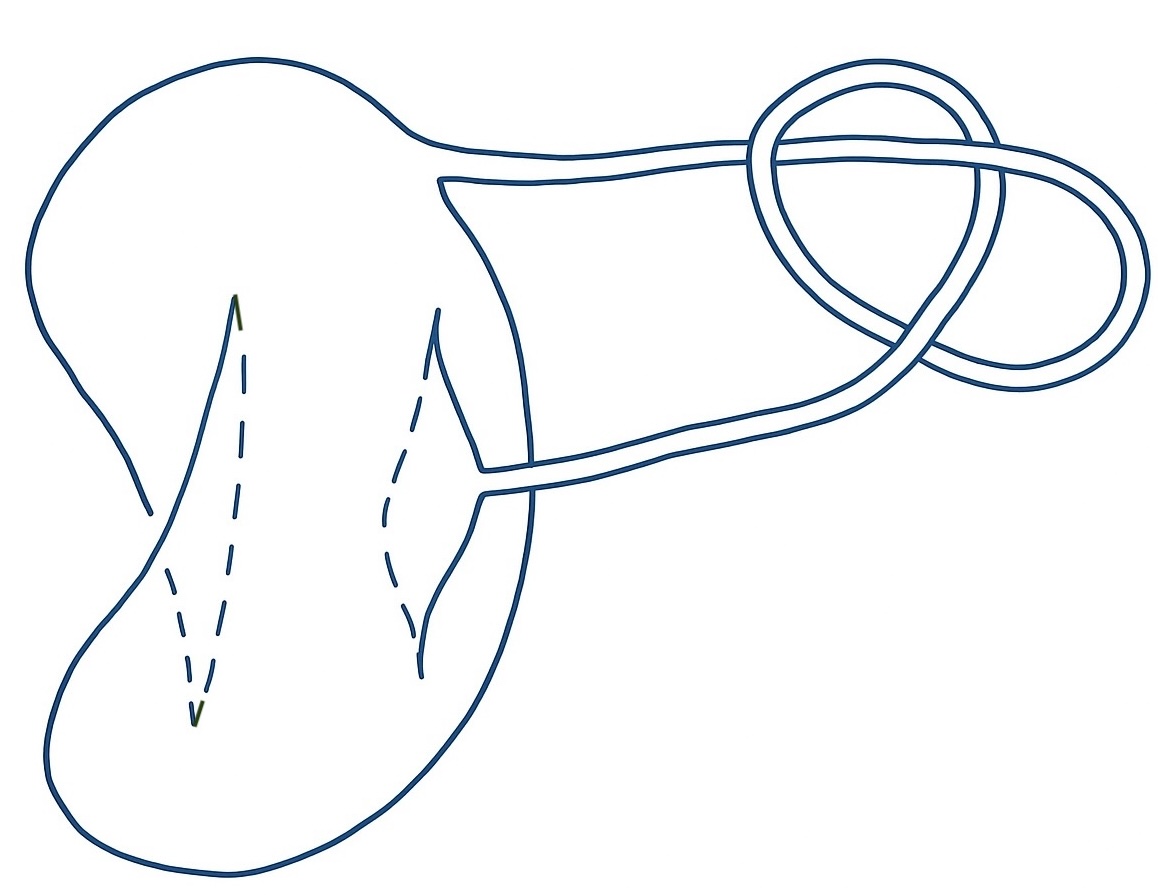} 
\caption{A knotted torus whose full collection of fold curves consists of a single unknotted curve. }
\label{torusOneCurve}
\end{figure}

\begin{corollary} \label{1pshere}
Suppose that $F$ is a genus $g$ surface formed by starting with a 2-sphere and successively adding tubes.
Then $F$ can be isotoped so that its full collection of fold curves consists of a single unknotted curve. 
 \end{corollary}
 The  surfaces described in Corollary~\ref{1pshere} includes a  large class of knotted surfaces.  It includes the boundary surfaces of all knotted handlebodies,
 and also surfaces where successive handles can pass through previous handles.

%%%%%%%%%%%%%%%%%%%%%%%%%%%%%%%%%%%%
%%%%%%%%%%%%%%%%%%%%%%%%%%%%%%%%%%%%

 \section{From Curve to Fold} \label{CurvetoFold}
 
We now consider the converse question. Given a curve on a surface in $\R^3$, when can the surface be moved so that the curve becomes a fold curve?
More precisely, suppose that $\gamma$ is an oriented regular curve that lies on a surface $F$ embedded in $\R^3$.
Let $H_t: \R^3 \to \R^3$ be an isotopy with $H_0 $ the identity map and $F_t = H_t(F)$.
Does there exist  such an isotopy  leaving $\gamma$ fixed throughout such that $\gamma$ is a fold curve of $F_1$?
When such an isotopy exists we say that $\gamma$ is realizable as a fold curve.
We define an integer invariant $\delta(F,\gamma)$, depending on the curve $\gamma$ and the surface $F$,
that gives a complete obstruction to finding such an isotopy.
An isotopy exists if and only $\delta(F,\gamma)= 0$.

For example, the standard $(3,2)$ trefoil knot lying on an unknotted torus cannot be 
transformed into a fold curve by an isotopy of this torus. 
A computation shows that the $\delta$ invariant in this case has value three. 
In contrast, the trefoil curve shown at right in Figure~\ref{profiletrefoil}  has $\delta$ invariant equal to zero, and after an isotopy of the torus it becomes
a fold curve.

 \begin{figure}[htbp] % figure placement: here, top, bottom, or page
\centering
\includegraphics[width=1.5in]{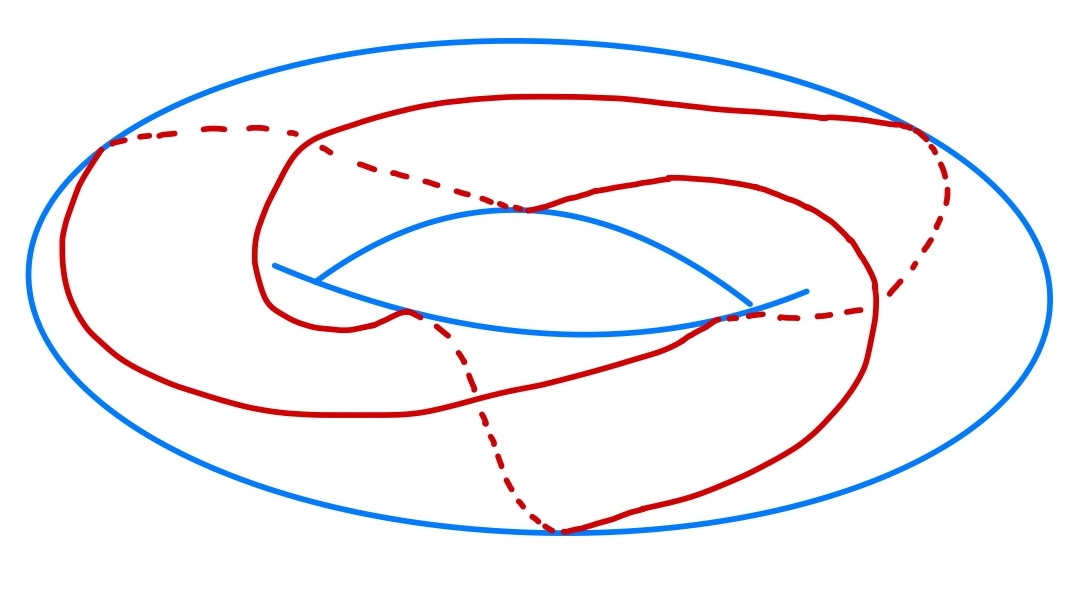} \includegraphics[width=1.5in]{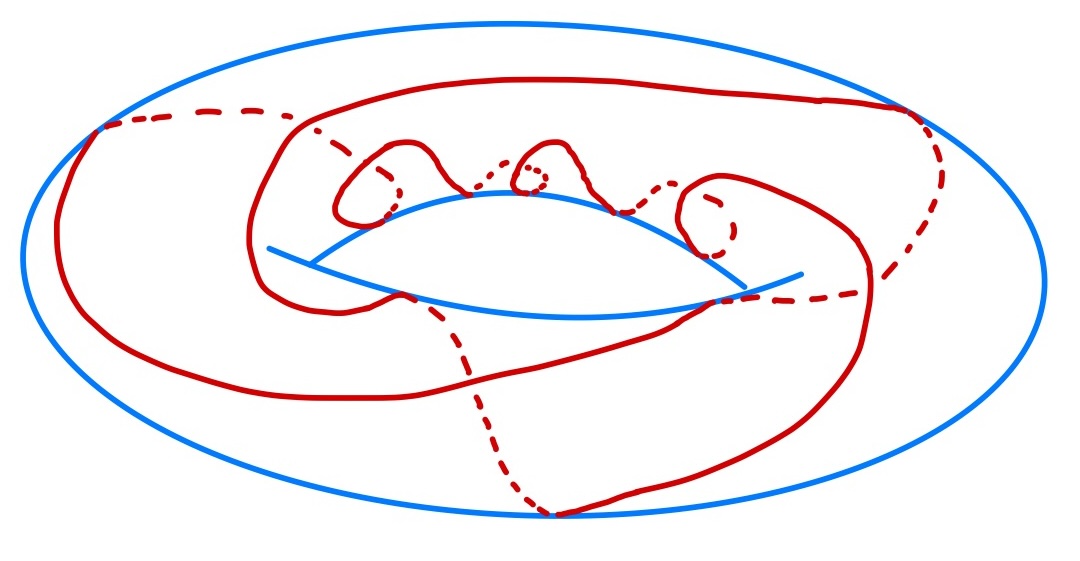} 
\caption{The standard (3,2) trefoil knot at left is not realizable as a fold curve that lies on an unknotted torus. The trefoil knot at right 
 becomes a fold curve after an isotopy of the torus.}
\label{profiletrefoil}
\end{figure}

Given a simple curve $\gamma$ in $\R^3$ with no vertical tangents,
each point of $\gamma$ has a unique horizontal unit vector $\vec h$  that is normal to $\dot \gamma$ and for which $[\dot \gamma, \vec h, \partial/\partial z]$ is a positively oriented basis for $\R^3$.
The {\em writhe} $ w(\gamma)$ of $\gamma$  is the linking number of $\gamma$ with a horizontal push-off 
$ \gamma_+ = \gamma + \epsilon \vec h$ for small $\epsilon$. 
This quantity is independent of the orientation of $\gamma$ or $\vec h$ and of $\epsilon$ for $\epsilon$ sufficiently small.
The definition of  writhe can be extended to  curves with isolated vertical tangents at points where the curvature is nonzero.
Instead of the vector field $\vec h$, which is no longer continuous, we define a line field $ \nu$.
At  a vertical tangency the horizontal line $\nu$ is  the unique line perpendicular to the osculating plane.
A curve with either one or two components is formed by taking both the $+\epsilon$-pushoff and
 $-\epsilon$-pushoff  of $\gamma$ along
$\nu$. The writhe is then obtained by taking the sum of the linking numbers of these components with $\gamma$ and dividing by two.  The value can be a half-integer if 
the line-field is non-trivial.
This definition applies in particular to fold curves with cuspidal points. 

Now let $F \subset \R^3$ be a smooth embedded oriented surface with normal unit vector field $\vec n_F$
and $\gamma \subset F$ a curve .
The  surface-normal framing  along $\gamma$ is given by the restriction of $\vec n_F$ to $\gamma$
and  can be used to produce a push-off curve $\gamma_{F}^+  = \gamma + \epsilon \vec n_F$, with  $\epsilon>0$ a small constant. 
The {\em curve-surface linking number} $\lambda (\gamma, F)$ of $\gamma$ in $F$ is the linking number of $\gamma_{F}^+$ and $\gamma$,
and measures the twisting of an annular neighborhood of $\gamma$ on $F$ around $\gamma$.  
It can be computed by counting the algebraic intersection number of
$\gamma_{F}^+$  and a Seifert surface for $\gamma$.
Note that the value of  $\lambda (\gamma, F)$ is independent of the orientations of $F$ or $\gamma$,
and also independent of $\epsilon$ for $\epsilon$ sufficiently small. Moreover $\lambda (\gamma, F)$ is preserved by
an isotopy of $F$.  The following lemma captures a useful property of $\lambda (\gamma, F)$.

\begin{lemma} \label{twists}
Let $F, G \subset \R^3$ be a pair of smooth embedded oriented surfaces and $\gamma \subset F \cap G $ a curve  on each of $F$ and $G$.
Then there is an isotopy $H_t: \R^3 \to \R^3,   ~0 \le t \le 1$, from the identity map $H_0 $ 
to a diffeomorphism $H_1$ such that $H_t$
leaves $\gamma$ fixed and such that  $H_1(F)$ coincides with $G$ in an open neighborhood of $\gamma$ 
if and only if  $\lambda (\gamma, F) =  \lambda (\gamma, G)$. 
\end{lemma}
\begin{proof} 
The value of  $\lambda (\gamma, F)$ is determined by a local neighborhood of $\gamma$ in $F$ and
does not depend on the isotopy class of $F$.
So if there is an isotopy of $F$ that makes the two surfaces coincide in a neighborhood of $\gamma$ then  $\lambda (\gamma, F) =  \lambda (\gamma, G)$. 

Conversely, suppose that $\lambda (\gamma, F) =  \lambda (\gamma, G)$.
For small $\epsilon$, an $\epsilon$-tubular neighborhood $N_\epsilon  (\gamma) $ of $\gamma$ in $\R^3$
is a solid torus containing proper annuli $A_F = F \cap N_\epsilon  (\gamma)$ and $A_G = G \cap N_\epsilon  (\gamma)$. 
Each of $A_F,  A_G $ is boundary parallel  in  $N_\epsilon$ and there is a proper isotopy of  $N_\epsilon$ taking one to the other if
and only if their boundary curves are isotopic. 
Let  $S$ be a Seifert surface for $\gamma$ and $L$ be the longitude 
of $\partial N_\epsilon (\gamma)$ given by $S \cap \partial N_\epsilon  (\gamma) $.  
With this choice of longitude, each of $A_F,  A_G $ has a boundary slope given by
one longitude and some number of meridians.

The value of  $\lambda (\gamma, F)$  is computed by counting a linking number which is equal to the algebraic intersection number of $S$ with a normal pushoff $\gamma_{F}^+$ of $\gamma$  that lies on $\partial N_\epsilon (\gamma)$. 
This is equal to the algebraic intersection number of $L$ with one of the boundary components of $A_F$, 
since this latter curve is isotopic to $\gamma_{F}^+$ on $\partial N_\epsilon$.  A similar statement applies to $G$.  
Since  $\lambda (\gamma, F) =  \lambda (\gamma, G)$,
 a boundary component of $A_F$ intersects $L$ in the same number of points as a boundary component
 of $A_G$. This algebraic intersection number counts the number of meridians in the slope.
 Thus the annuli  have the same boundary slope and are therefore  properly isotopic in $N_\epsilon$.

Using a partition of unity we can  extend this isotopy from $N_\epsilon (\gamma)$  to an
isotopy $H_t: \R^3 \to \R^3$ that is is the identity outside a neighborhood $N_{2\epsilon} (\gamma)$ and 
with $H_1(F)$ coinciding with $G$ in $N_\epsilon (\gamma)$. 
This proves the Lemma.
\end{proof} 

\noindent
{\bf Definition} The {\em $\delta$-invariant}  of an embedded curve $\gamma$ lying on a surface $F$ is the integer
$$
\delta(\gamma, F) = \lambda (\gamma, F) - w(\gamma).
$$
A {\em potential fold curve} is a curve in $\R^3$  that is a fold curve for some annulus $A$.
This rules out, for example, curves that contain a vertical segment, but allows for curves with isolated cuspidal points.
Any curve with no vertical tangencies is a potential fold curve.
The next theorem  shows that the $\delta$-invariant is the obstruction to turning a potential fold curve on a surface into a fold curve by an isotopy of the surface that fixes the curve.

\begin{theorem} \label{delta}
Let $\gamma$ be a potential fold curve that lies on an embedded surface $F \subset \R^3$.
There is an isotopy of $F$ that fixes $\gamma$ and which  turns $\gamma$ into a fold curve if and only $\delta(\gamma, F) = 0$. 
\end{theorem}
 
\begin{proof}
Assume first that  $\gamma$ is realizable as a fold curve. 
Then at at each point of $\gamma$ 
the outward pointing vector $\vec n_F$ is horizontal, 
and thus coincides with a unit-length horizontal vector 
in the horizontal line field $ \nu$ associated to $\gamma$.
A result of Haefliger  \cite{Haefliger} shows that there are an even number of cuspidal points 
aong $\gamma$ (possibly zero), each of which projects to a left-handed cusp.  Thus the normal line field $\nu$ is
orientable and there is  a nonzero section $\vec \nu^+$ along $\gamma$ which coincides with $\vec n_F$.
The linking numbers of $\gamma$ computed using the identical push-offs associated to $\vec \nu^+$ and
  $\vec n_F$ are therefore equal. 
 From the definitions, the first is the writhe of $\gamma$  and the second is $\lambda (\gamma, F)$,
so their difference vanishes and  $\delta(\gamma, F) = 0$.

Conversely, suppose that  $\delta(\gamma, F) = 0$.
By assumption there is an annulus $A$ such that $\gamma$ is a fold curve of $A$.
The previous argument shows that  $\delta(\gamma, A) = 0$, so that 
 $\delta(\gamma, F) = \delta(\gamma, A) $.
Adding the writhe of $\gamma $ to each side of this equation shows that 
$\lambda (\gamma, F) =  \lambda (\gamma, A)$.  
By Lemma~\ref{twists} there is an isotopy of $F$ fixing $\gamma$ 
that carryies a neighborhood of $\gamma$ in $F$ to a neighborhood of $\gamma$ 
in $A $, and thus turns $\gamma$ into a fold curve.
  \end{proof}
 
The twisting process in Theorem~\ref{delta}  of an annular neighborhood that turns $\gamma$  into a fold curve is illustrated in Figure~\ref{twist} for a (1,1) curve on an unknotted torus. 
In this example the writhe is zero and the surface linking number is one, so that $\gamma$ cannot be made into a fold curve. 
The  initial arc transverse to $\gamma$ at  $\gamma(0)$ does not match up with the final arc  transverse to $\gamma$ at  $\gamma(1)$,
an isotopy of an annular neighborhood is not constructed,
and the torus cannot be isotoped so that this (1,1) curve is a fold curve. However by isotoping the (1,1) 
curve on the torus so that its writhe equals one, we can arrange that Theorem~\ref{delta} does apply.
The second curve in  Figure~\ref{twist} has writhe one and does become a fold curve after an  isotopy of the torus.
 
\begin{figure}[htbp] % figure placement: here, top, bottom, or page
\centering
\includegraphics[width=1.5in]{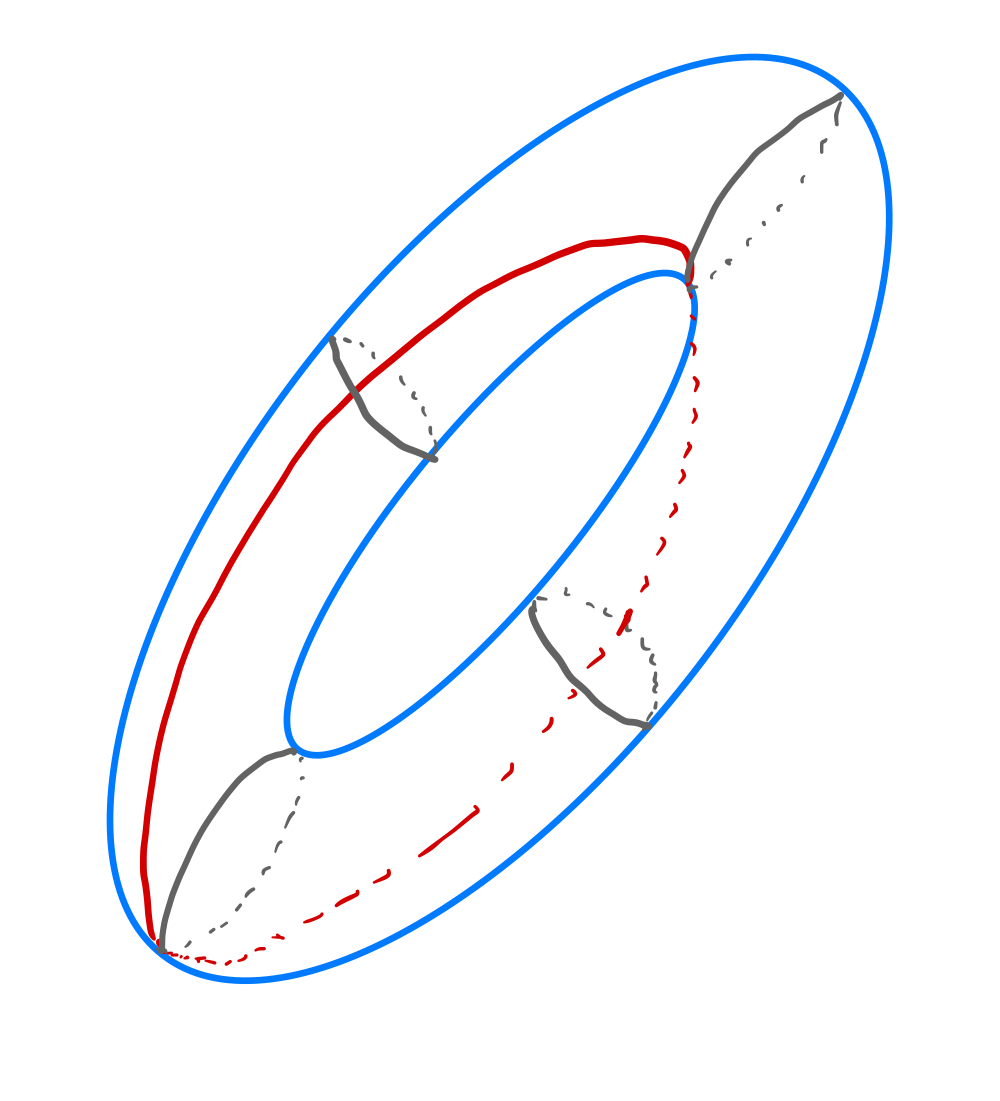} \hspace{1in} \includegraphics[width=1.5in]{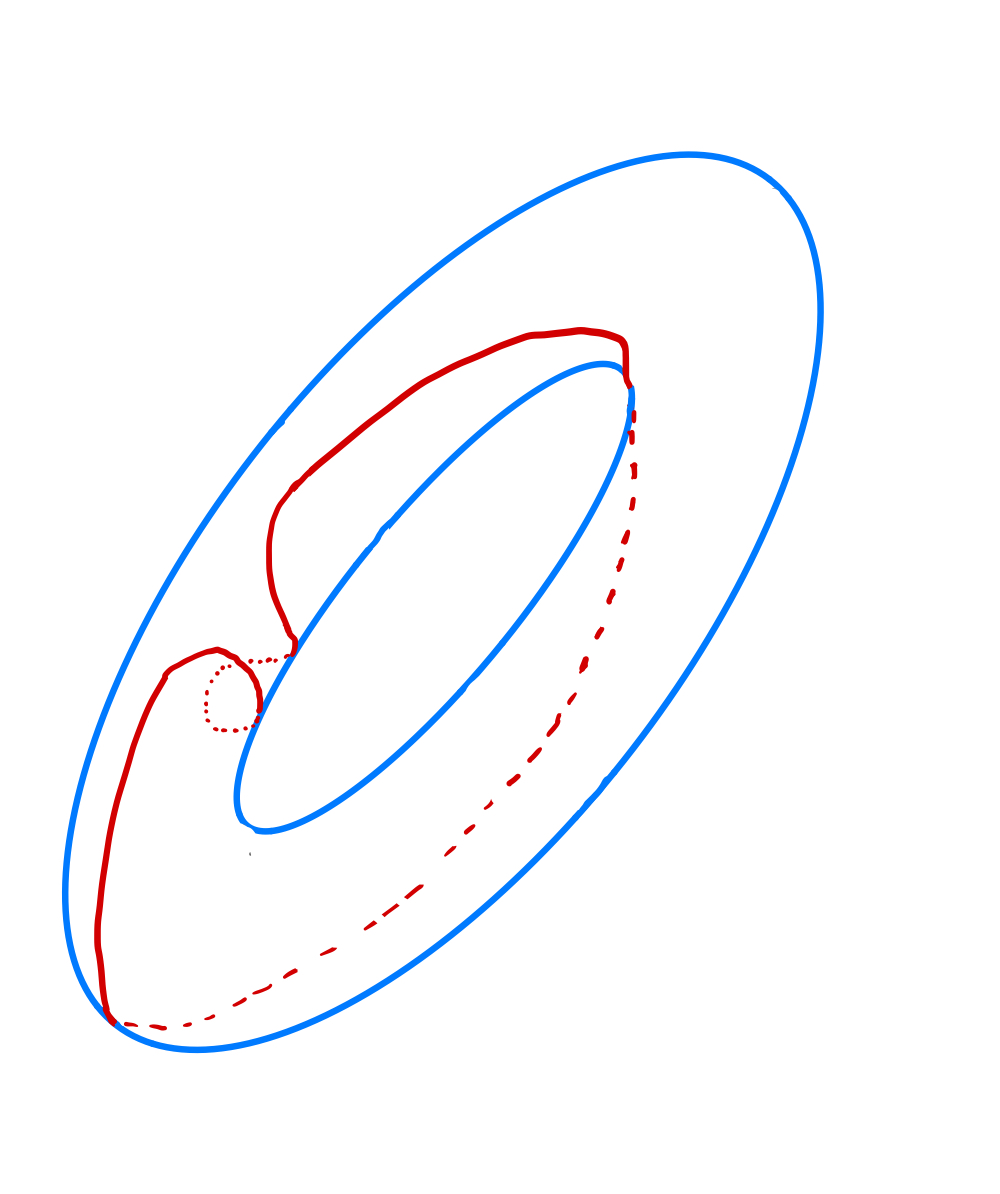}  \\
\includegraphics[width=0.8in]{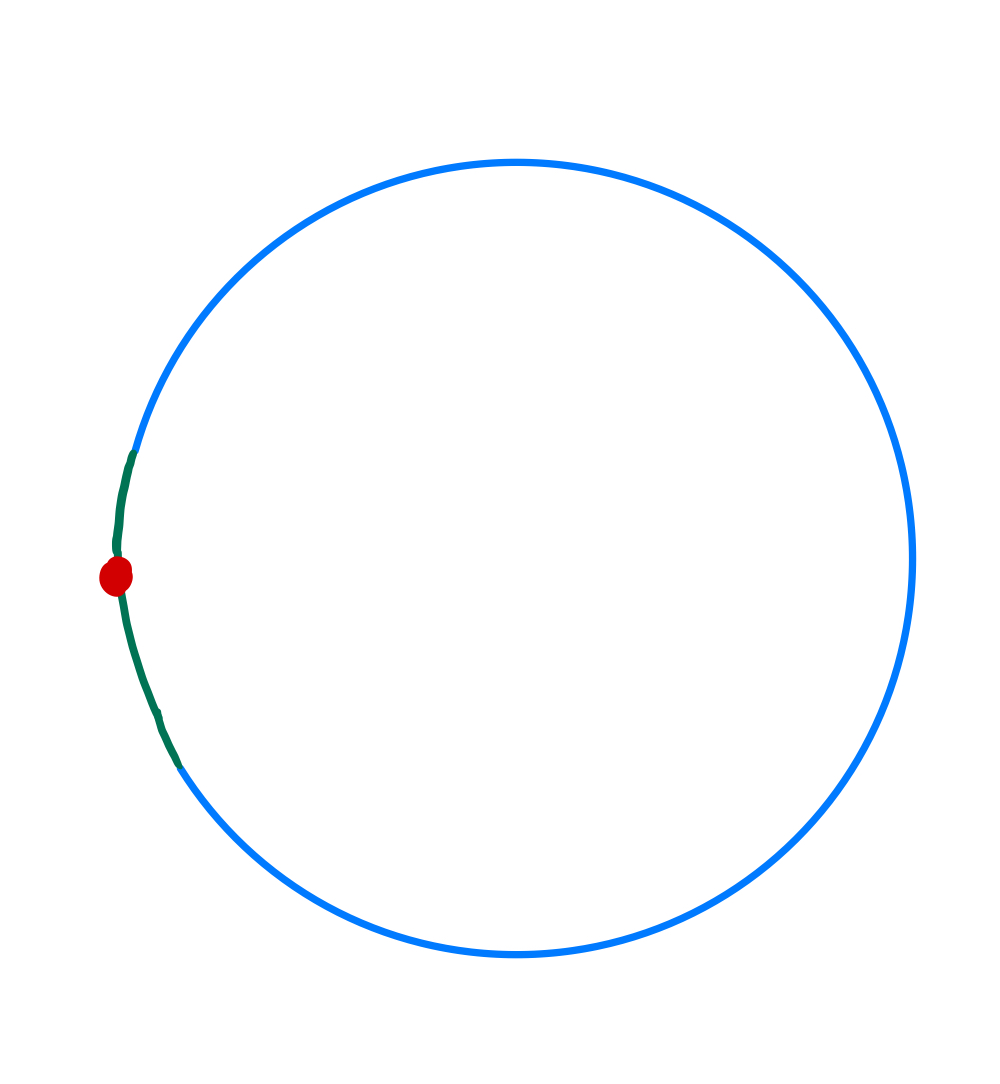} \includegraphics[width=0.7in]{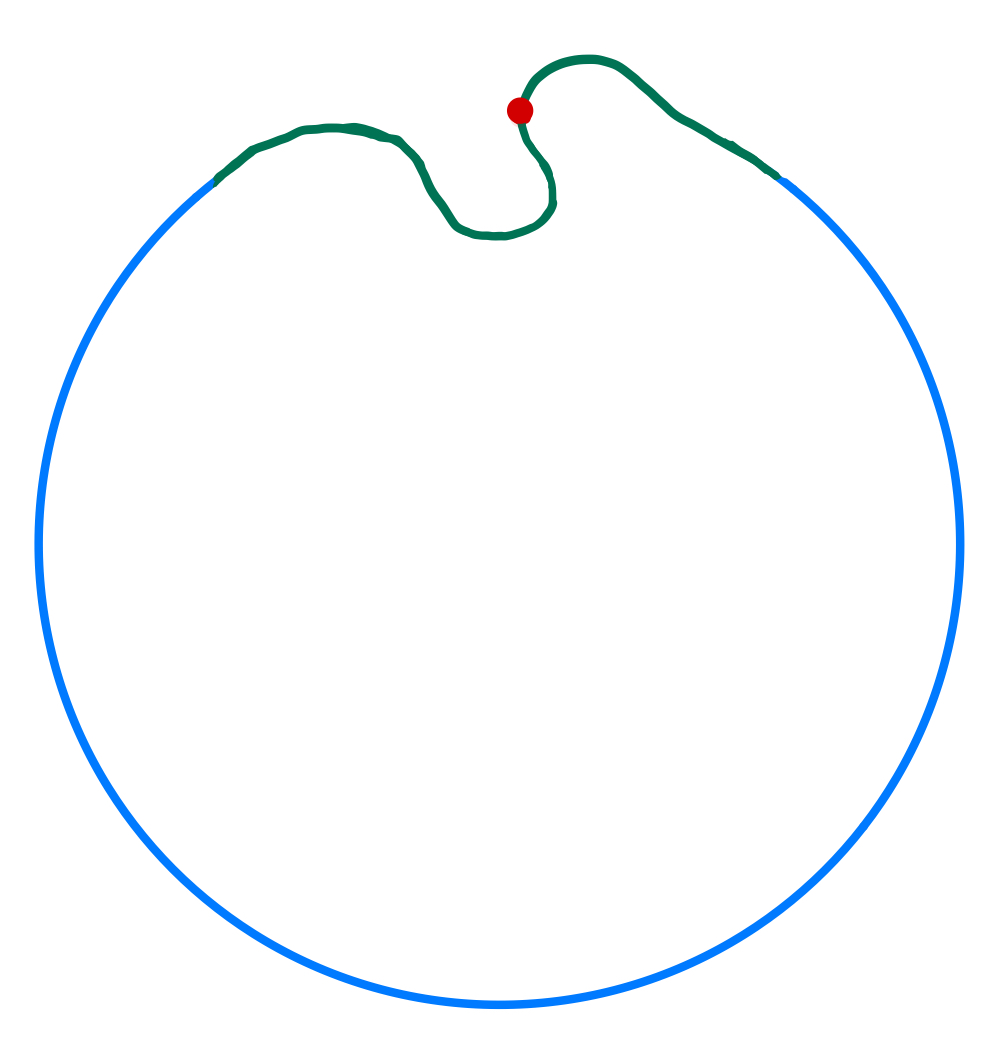} \includegraphics[width=0.85in]{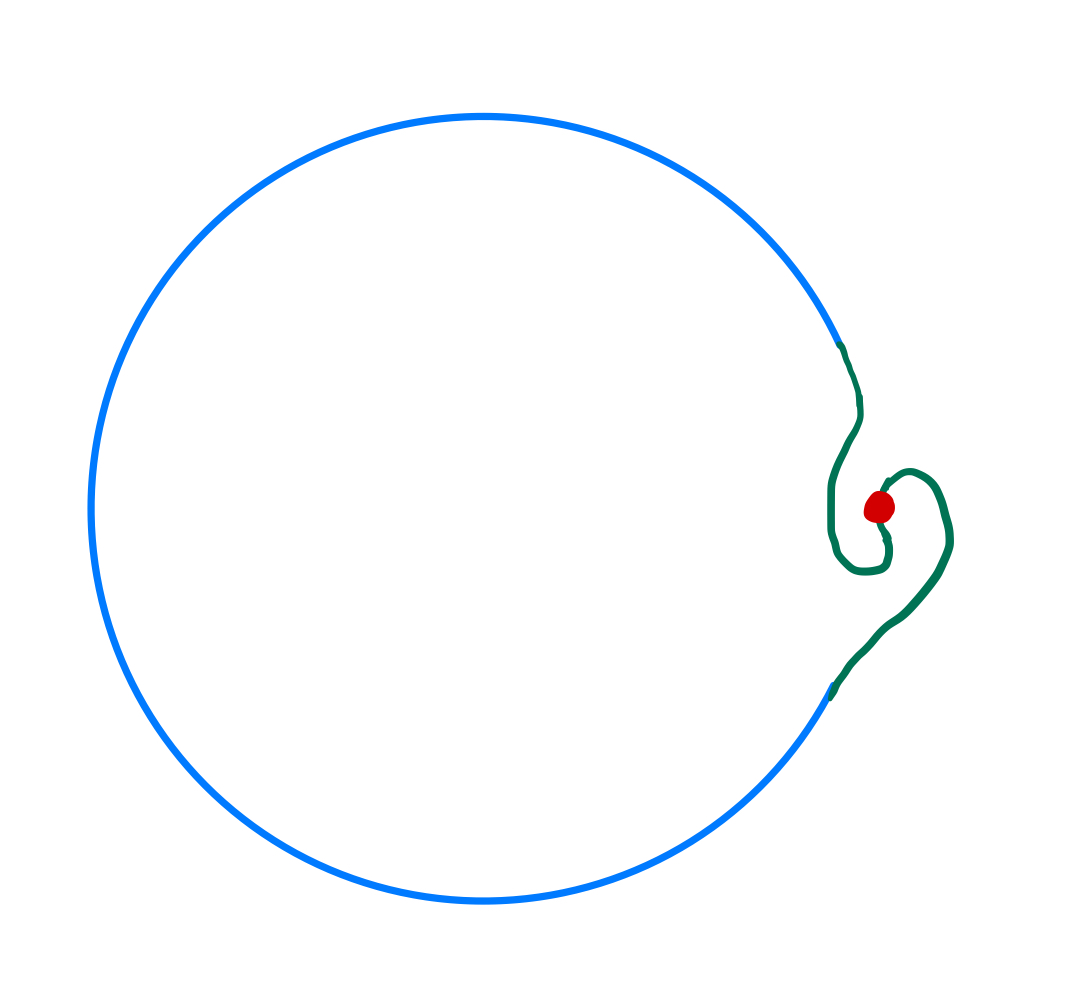} \includegraphics[width=0.7in]{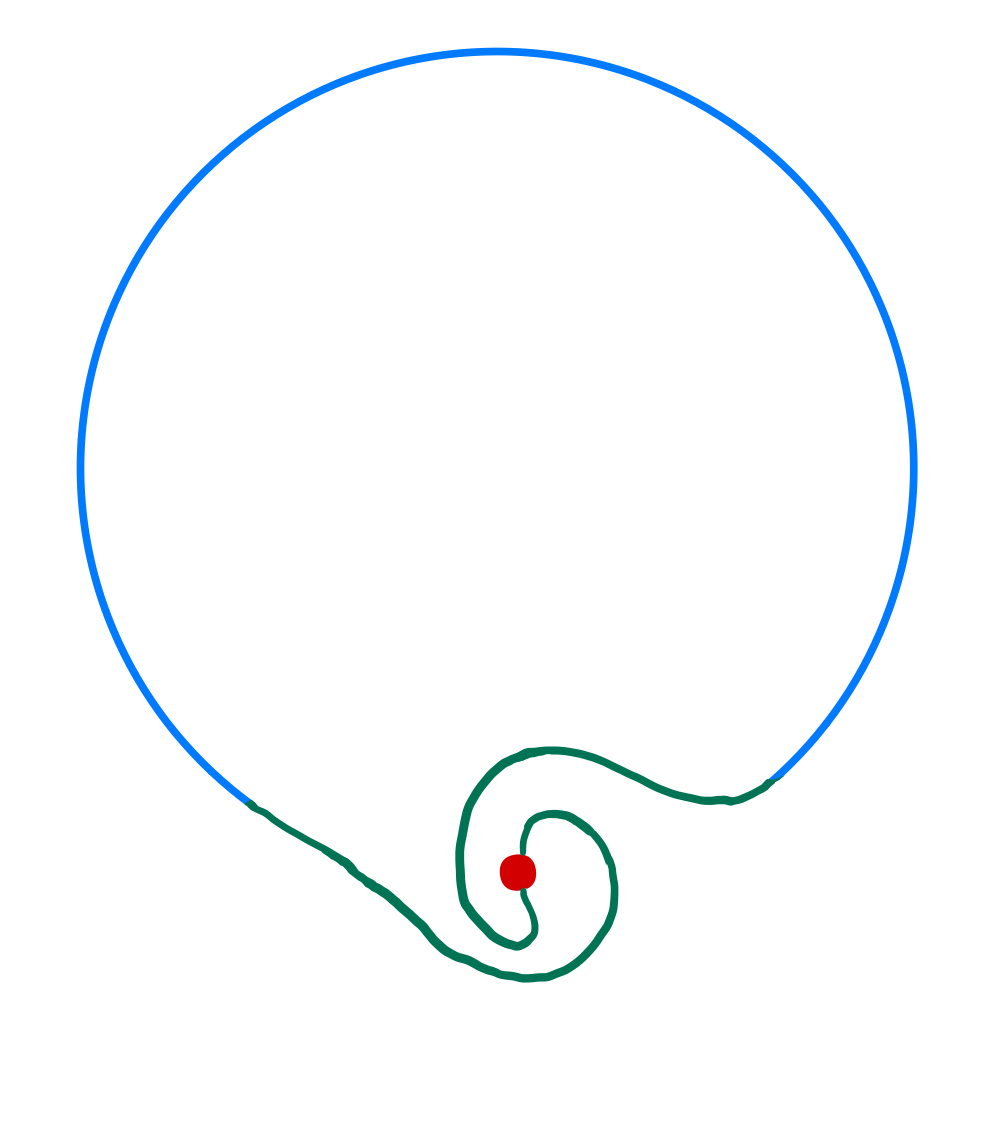} \includegraphics[width=0.9in]{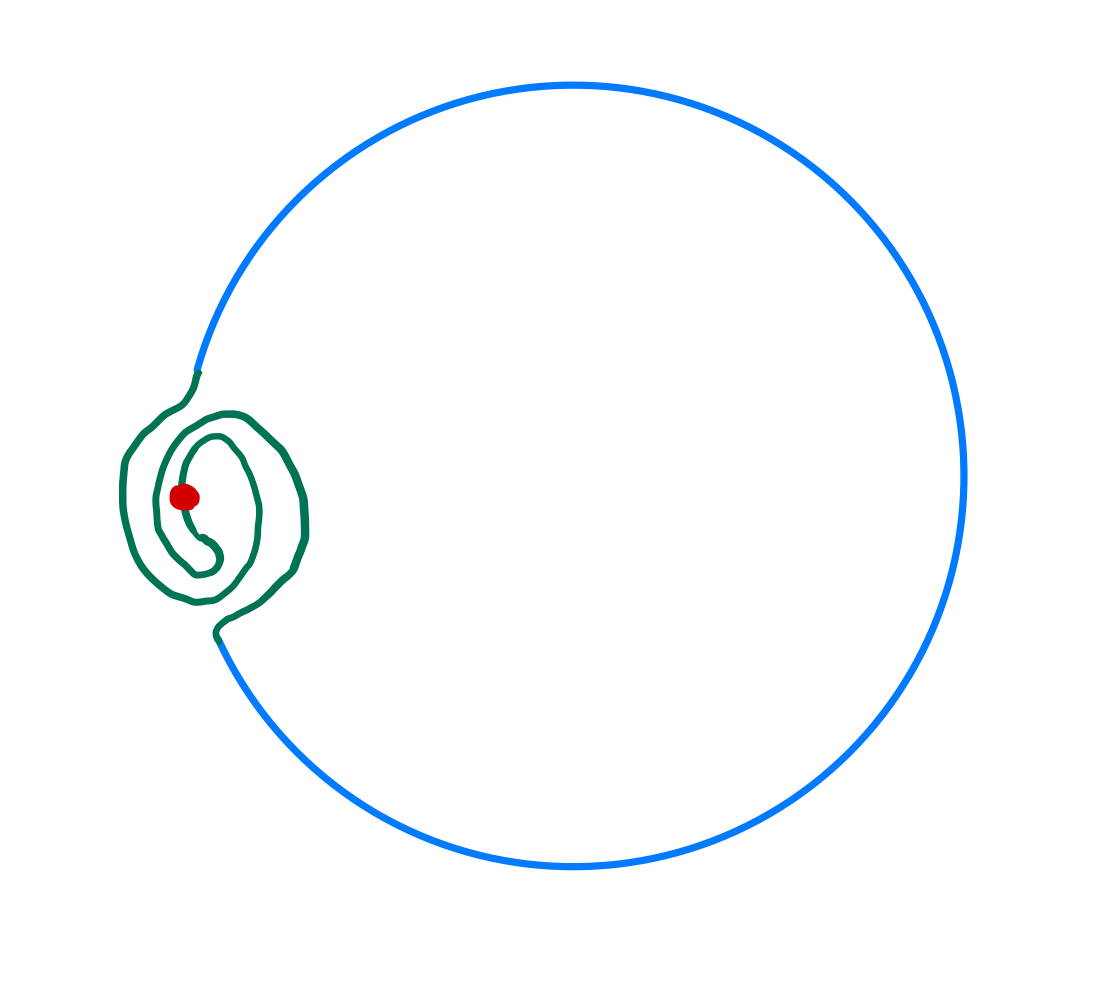} 
\caption{An annular neighborhood of a  (1,1) curve on a torus is twisted so that its tangent plane is vertical and it becomes a fold curve. 
The last cross-section does not match the first, due to the addition of a full twist while traversing the curve. 
An isotopy of the curve that changes its writhe, as in the torus curve at right, resolves this issue.} 
\label{twist}
\end{figure}
 
 In contrast to Theorem~\ref{delta}, if an isotopy of $F$ is not required to fix $\gamma$ then $F$ can always be moved to make  $\gamma$ into a fold curve.
  
\begin{corollary} \label{afterisotopy}
Let $\gamma_0$ be an embedded curve on an embedded surface $F_0 \subset \R^3$. Then 
there is an isotopic curve
$\gamma_1 \subset F_0$   and an isotopy of $F_0$,  fixing $\gamma_1 $, taking  $F_0$  to a surface $F_1$
for which $\gamma_1 $ is a fold curve.
\end{corollary}
\begin{proof}
A small isotopy moves $\gamma_0$ on $F$ to a curve with no vertical tangents, i.e. a curve that
is transverse but not perpendicular to the fold curves of $F$ where it crosses them.
Suppose that $\gamma_0$  has surface linking number $\lambda$ and writhe $w$. 
There is an isotopy  of $F$ that performs a series of 
type-I Reidemeister moves on $\gamma$   that change the writhe and
give a curve $\gamma_1 \subset F$ for which  $\lambda = w$.
These moves add twists to an arc of $\gamma$ adjacent to a fold curve of $F$, as in Figure~\ref{twist}. 
Theorem~\ref{delta} now applies, so
there is a further isotopy of the surface in $\R^3$ that fixes $\gamma_1$ and produces a surface $F_1$ for which $\gamma_1$ a fold curve.
\end{proof}

The  {\em surface genus} of a curve $\gamma$ is the smallest genus of an unknotted surface in $\R^3$ that contains a curve isotopic to $\gamma$. 

\begin{corollary}
A knot $K$ with surface genus $g(K)$ can be realized as a fold curve of an unknotted genus $g(K)$ surface $F \subset \R^3$.
\end{corollary}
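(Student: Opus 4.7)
The plan is to unpack the definition of surface genus and then invoke Corollary~\ref{afterisotopy} directly. By the definition of the surface genus $g(K)$, there exists an unknotted surface $F_0 \subset \R^3$ of genus exactly $g(K)$ together with a smooth embedded curve $\gamma_0 \subset F_0$ that is isotopic to $K$ as a knot in $\R^3$. Since $F_0$ is unknotted, it is smoothly isotopic to a standardly embedded genus-$g(K)$ surface.

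I would then apply Corollary~\ref{afterisotopy} to the pair $(\gamma_0, F_0)$. This produces a curve $\gamma_1 \subset F_0$ that is isotopic to $\gamma_0$ inside $F_0$, together with a surface $F$ smoothly isotopic to $F_0$ on which $\gamma_1$ appears as a profile curve. The isotopy from $\gamma_0$ to $\gamma_1$ takes place in $F_0$ and hence in $\R^3$, so $\gamma_1$ represents the same knot type as $\gamma_0$, namely $K$. Furthermore, since $F$ is smoothly isotopic to $F_0$, which was chosen to be unknotted of genus $g(K)$, the surface $F$ has genus $g(K)$ and is isotopic to a standardly embedded surface, which is the sense in which the corollary's conclusion should be read.

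There is no serious obstacle: the geometric content has already been packaged into Corollary~\ref{afterisotopy}, and the existence of an unknotted genus-$g(K)$ surface carrying a curve isotopic to $K$ is immediate from the definition of surface genus. The only point to verify is that neither operation --- isotoping $\gamma_0$ within $F_0$ to a curve $\gamma_1$ with writhe equal to its surface linking number, and then ambiently isotoping $F_0$ to $F$ --- alters the knot type of the resulting curve or the unknottedness of the surface, both of which are built into the statement of the corollary.
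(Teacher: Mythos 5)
Your proposal is correct and is exactly the argument the paper intends: the paper states this corollary without proof, as an immediate consequence of Corollary~\ref{afterisotopy} applied to a minimal-genus unknotted surface carrying a curve isotopic to $K$, which is precisely what you do. Your remark that ``standardly embedded'' must here be read as ``isotopic to a standardly embedded surface'' (i.e.\ unknotted) is also right, since a surface that is literally a tubular neighborhood of a planar graph has only unknotted profile curves.
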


\begin{example} \label{multi}
Any torus knot has a representative curve that can be realized as a fold curve for an unknotted torus.
\end{example}

%%%%%%%%%%%%%%%%%%%%%%%%%%%%%%%%%%%%%%%%%%%%%%%%%
%%%%%%%%%%%%%%%%%%%%%%%%%%%%%%%%%%%%%%%%%%%%%%%%%

\section{Geometric obstructions} \label{restrictions}

In this section we present  examples of restrictions imposed on an embedded surface $F \subset \R^3$ 
by the condition that $F$ contains a given curve $\gamma$ as a fold curve.
These apply  irrespective of whether there are additional fold curves on the surface.
These restrictions are connected to the isotopy class of $F$ and do not apply if
we let the isotopy class  change, or if we allow $F$ to be immersed rather than
embedded.  In particular, they behave differently from obstructions to lifting generic maps
of a surface into $\R^2$ to maps into  $\R^3$ as studied in \cite{Haefliger, Levine, Millett, Pignoni}.

We illustrate these restrictions on the two curves shown in Figure~\ref{pfcurves}.

\begin{figure}[h]
\centering
\begin{minipage}{.3\textwidth}
\centering
\includegraphics[width=.8\linewidth]{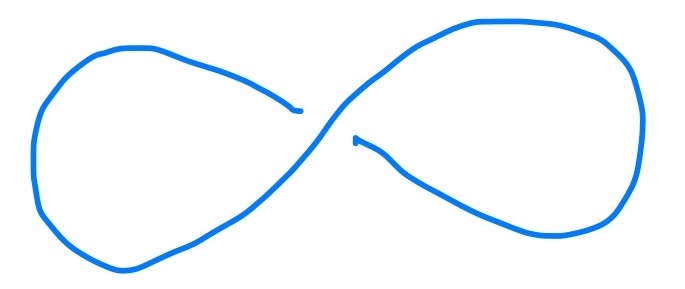}
\end{minipage}
\begin{minipage}{.3\textwidth}
\centering
\includegraphics[width=.8\linewidth]{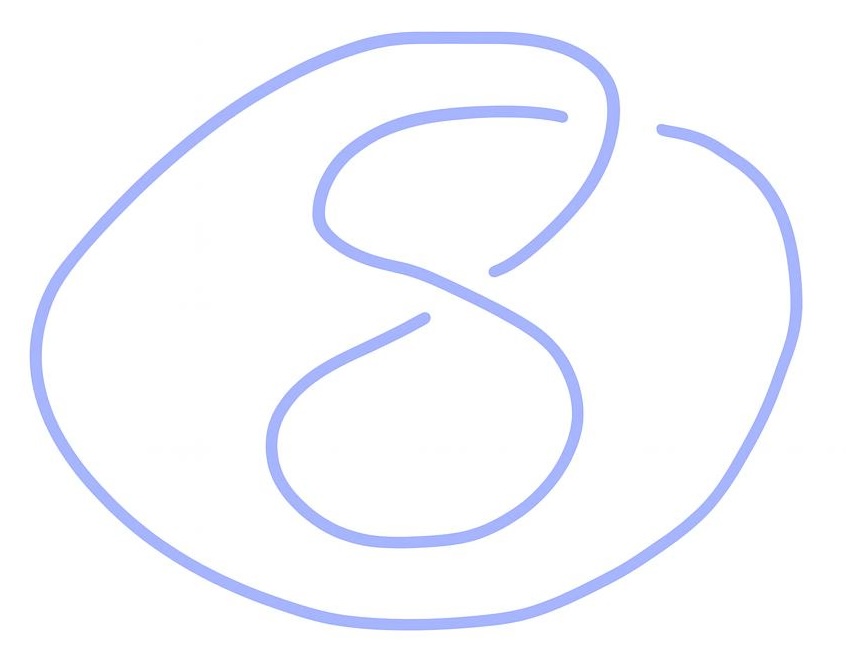}
\end{minipage}
\caption{Two potential fold curves, $\gamma_1$  (left) and $\gamma_2$ (right).}
\label{pfcurves}
\end{figure}

\begin{example}
The curves $\gamma_1$ and $\gamma_2$ in Figure~\ref{pfcurves} cannot be fold curves lying on an embedded sphere.
 \end{example}
 \begin{proof}
 The surface linking number $\lambda$ of a curve lying on a sphere is zero, since the curve
 bounds a disk on the sphere that is disjoint from a normal push-off.
Since $ w(\gamma_1) = 1$ and $ w(\gamma_2) = 2$,
the $\delta$ invariant of each of $\gamma_1$ and $\gamma_2$ is nonzero.
Thus neither can be a fold curve lying on a sphere.
\end{proof}

Note that $\gamma_2$ projects to a curve  in the plane that coincides with 
the apparent contour generated by an embedded 2-sphere in $\R^3$.
Thus the $\delta$ invariant gives information that cannot be deduced from the apparent contour of a surface.

Observations of this type can be applied when studying surfaces of unknown topology. 
A biologist observing $\gamma_1$ or $\gamma_2$ as a fold curve in an image
of the surface of an organism seen through a microscope
can conclude that the surface is not  a sphere.

We now consider two examples where the surface is a torus.
A fold curve that is  an embedded loop in the plane  can be  generated by either a knotted or an unknotted torus. 
In contrast, each of the curves in Figure~\ref{pfcurves} can be a fold curve of an unknotted torus, but not of a knotted torus.

\begin{example}
The curves $\gamma_1$ and  $\gamma_2$   in Figure~\ref{pfcurves} cannot be fold curves lying on a knotted torus.
 \end{example}
\begin{proof}
We show this for $\gamma_1$. The argument for $\gamma_2$ is  similar.
Suppose that the curve $\gamma_1$ is a fold curve on a torus. 
A curve bounding a disk on a torus has surface linking number zero. 
Since $\gamma_1$ has writhe one and by  Theorem~\ref{delta} 
 has $\delta$ invariant  zero, it
cannot bound a disk on the torus.

Similarly $\gamma$ cannot represent a torus meridian, which also has  surface linking number  zero since it bounds an embedded  compressing disk in $\R^3$. 
Thus $\gamma$ is homotopically nontrivial and not a meridian, and thus represents $l$ longitudes and $m$ meridians with $l>0$. 
Since $\gamma$ itself is unknotted, this is possible only if  the torus is unknotted.
\end{proof}
 
\begin{example}
If a curve $\gamma$ is isotopic to a  trefoil knot and is a fold curve for a torus, and if the sum of the number of crossings and twice the number of cusps in the projection of $\gamma$ is less than six, then the torus is knotted. 
\end{example}
\begin{proof}
A crossing contributes $\pm 1$ to the writhe and a cusp contributes $\pm 1/2$.
There are at least three crossings in the projection of $\gamma$, so the condition on the number of crossings and cusps implies that the writhe of $\gamma$ is nonzero (mod $6$).
The surface linking number of a trefoil on the surface of an unknotted torus is zero (mod $6$).
Since these are not equal, Theorem~\ref{delta} implies that $\gamma$ cannot be a fold curve for an unknotted torus.
\end{proof}
So the standard three crossing curve representing a trefoil knot, as in Figure~\ref{profiletrefoil}, cannot be a fold curve on an unknotted torus.
%%%%%%%%%%%%%%%%%%%%%%%%%%% 

\end{document}